\renewcommand{\@biblabel}[1]{#1.} 
\theoremstyle{plain}
\newtheorem{theorem}{Theorem}[section]
\newtheorem{lemma}[theorem]{Lemma}
\newtheorem{corollary}[theorem]{Corollary}
\theoremstyle{definition}
\newtheorem{definition}[theorem]{Definition}
\newtheorem{remark}[theorem]{Remark}
\begin{document}


\title{
	On diameter bounds for planar integral point sets in semi-general position
	\footnote{
		This work was carried out at Voronezh State University and supported by the Russian Science
		Foundation grant 19-11-00197.
	}
}

\author{
	N.N. Avdeev
	\footnote{nickkolok@mail.ru, avdeev@math.vsu.ru}
}

\maketitle

\paragraph{Abstract.}
A point set $M$ in the Euclidean plane is called a planar integral point set if all the distances between the
elements of $M$ are integers, and $M$ is not situated on a straight line.
A planar integral point set is called to be in semi-general position, if it does not contain collinear triples.
The existing lower bound for mininum diameter of planar integral point sets is linear.
We prove a new lower bound for mininum diameter of planar integral point sets in semi-general position
that is better than linear.

\section{Introduction}

An \textit{integral point set} in a plane is a point set $M$ such that all the usual (Euclidean) distances between the
points of $M$ are integers and $M$ is not situated on a straight line.
Every integral point set consists of a finite number of points~\cite{anning1945integral,erdos1945integral};
thus, we denote the set of all planar integral point sets of $n$ points by
$\mathfrak{M}(2,n)$ (using the notation in~\cite{our-vmmsh-2018})
and define the diameter of $M\in\mathfrak{M}(2,n)$ in the following natural way:
\begin{equation}
	\operatorname{diam} M = \max_{A,B\in M} |AB|
	,
\end{equation}
where $|AB|$ denotes the Euclidean distance.
The symbol $\# M$ will be used for cardinality of $M$, that is the number of points in $M$ in our case.

Since every integral point set can obviously be dilated to a set of larger diameter,
minimal possible diameters of sets of given cardinality are in the focus.
To be precise,
the following function was introduced~\cite{kurz2008bounds,kurz2008minimum}:
\begin{equation}
	d(2,n) = \min_{M\in\mathfrak{M}(2,n)} \operatorname{diam} M
	.
\end{equation}

It turned out to be very easy to construct a planar integral point set of $n$ points with $n-1$ collinear ones and one point out of the line
(so-called \textit{facher} sets);
the same holds for 2 points out of the line (we refer the reader to~\cite{antonov2008maximal}, where some of such sets are called \textit{crabs})
and even for 4 points out of the line~\cite{huff1948diophantine}.
For $9\leq n\leq 122$, the minimal possible diameter is achieved at a facher set~\cite{kurz2008bounds}.

\begin{definition}
	A set $M\in\mathfrak{M}(2,n)$ is called to be in \textit{semi-general position},
	if no three points of $M$ are collinear.
	The set of all planar integral point sets in semi-general position
	is denoted by $\overline{\mathfrak{M}}(2,n)$.
\end{definition}

Furthermore, the constructions of integral point sets in semi-general position of arbitrary cardinality
appeared~\cite{harborth1993upper};
such sets are situated on a circle.
Also, there is a sophisticated construction of a circular integral point set of arbitrary cardinality
that gives the possible numbers of odd integral distances
between points in the plane~\cite{piepmeyer1996maximum}.

\begin{definition}
	A set $M\in\overline{\mathfrak{M}}(2,n)$ is called to be in \textit{general position},
	if no four points of $M$ are concyclic.
	The set of all planar integral point sets in general position
	is denoted by $\dot{\mathfrak{M}}(2,n)$.
\end{definition}

It remains unknown if there are integral points sets in general position of arbitrary cardinality;
however, some sets $M\in \dot{\mathfrak{M}}(2,7)$ are known~\cite{kreisel2008heptagon,kurz2013constructing}.

The inequality
\begin{equation*}
	d(2,n) \leq \overline{d}(2,n) \leq \dot{d}(2,n)
	,
\end{equation*}
where
$
	\overline{d}(2,n) = \min_{M\in\overline{\mathfrak{M}}(2,n)} \operatorname{diam} M
$
and
$
	\dot{d}(2,n) = \min_{M\in\dot{\mathfrak{M}}(2,n)} \operatorname{diam} M
$,
is obvious; however, a more interesting relation holds:
\begin{equation*}
	c_1 n \leq d(2,n) \leq \overline{d}(2,n) \leq n^{c_2 \log \log n}
	.
\end{equation*}
The upper bound is presented in~\cite{harborth1993upper}.
The lower bound was firstly introduced in~\cite{solymosi2003note};
the largest known value for $c_1$ is $5/11$ for $n\geq 4$~\cite{my-pps-linear-bound-2019}.

There are some bounds for minimal diameter of planar integral point sets in some special positions.
Assuming that the planar integral point sets contains many collinear points,
the following result holds.
\begin{theorem}~\cite[Theorem 4]{kurz2008minimum}
	For $\delta > 0$, $\varepsilon > 0$, and $P\in\mathfrak{M}(2,n)$ with
	at least $n^\delta$ collinear points, there exists a $n_0 (\varepsilon)$
	such that for all $n \geq n_0 (\varepsilon)$ we have
	\begin{equation}
		\operatorname{diam} P \geq n^{\frac{\delta}{4 \log 2(1+\varepsilon)}\log \log n}
		.
	\end{equation}
\end{theorem}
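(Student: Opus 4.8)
\emph{Plan of proof.}
The plan is to convert the hypothesis ``many collinear points'' into the statement ``many factorizations of one fixed integer'' and then apply the classical divisor bound. Write $D:=\operatorname{diam}P$, fix a line $\ell$ carrying points $Q_1,\dots,Q_m$ of $P$ with $m\ge n^{\delta}$, and fix a point $P_0\in P\setminus\ell$ (which exists since $P$ is not collinear). Choose coordinates so that $\ell$ is the $x$-axis and $P_0=(0,h)$ with $h>0$; set $Q_i=(t_i,0)$ and $d_i:=|P_0Q_i|\in\mathbb Z$, so $d_i^2=h^2+t_i^2$. Since $h$ and $|t_i|$ are legs of a right triangle with hypotenuse $d_i\le D$, one gets $h\le D$ and $|t_i|\le D$.

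First I would sort out the arithmetic of the $t_i$. The differences $t_i-t_j=\pm|Q_iQ_j|$ are integers, and $d_i^2-d_j^2=(t_i-t_j)(t_i+t_j)$ shows that each $t_i+t_j$ is rational with denominator dividing $|t_i-t_j|\le D$. Writing $t_i=t_1+(t_i-t_1)$, one obtains a single denominator $q$, with $q\mid\gcd_{j}|t_1-t_j|\le D$, that clears all the $2t_i$ simultaneously. Hence $D_i:=2qd_i$ and $T_i:=2qt_i$ are integers and $N:=D_i^2-T_i^2=4q^2h^2$ is a positive integer \emph{independent of $i$}, with $N=4q^2h^2\le 4D^4$. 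Now $N=(D_i-T_i)(D_i+T_i)$ exhibits $N$ as a product of two positive integers (same sign since the product $N>0$, both positive since the sum $2D_i>0$), and the map $i\mapsto(D_i-T_i,\,D_i+T_i)$ is injective because the ordered pair determines $T_i$, hence $t_i$, hence $Q_i$. Therefore $m\le\tau(N)\le\tau(4D^4)$, where $\tau$ is the divisor-counting function.

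Then I would finish with the classical estimate (Wigert) $\log\tau(N)\le(1+o(1))\dfrac{\log 2\cdot\log N}{\log\log N}$. It yields, for $D$ large, $\delta\log n\le\log m\le(1+o(1))\dfrac{\log 2\,(4\log D+\log 4)}{\log\log(4D^4)}$, and solving for $\log D$ gives $\log D\ge\dfrac{\delta\,\log n\,\log\log(4D^4)}{4(1+o(1))\log 2}$. Finally, the already available linear bound $\operatorname{diam}P\ge d(2,n)\ge c_1 n$ forces $4D^4\ge n^{3}$ for large $n$, so $\log\log(4D^4)\ge\log\log n$; absorbing the $o(1)$ into a factor $(1+\varepsilon)$ produces $\log D\ge\dfrac{\delta}{4\log 2(1+\varepsilon)}\log n\,\log\log n$, i.e.\ $\operatorname{diam}P\ge n^{\frac{\delta}{4\log 2(1+\varepsilon)}\log\log n}$ for $n\ge n_0(\varepsilon)$.

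I expect the only genuinely delicate point to be the reduction to the integer identity $N=D_i^2-T_i^2$: one must verify that a \emph{single} denominator $q\le D$ clears every $2t_i$, and then track the size of $N$ carefully, since it is exactly the bound $N=O(D^4)$ that produces the constant $\tfrac{1}{4\log 2}$ in the exponent — any loss there changes the theorem. A secondary point is the comparison $\log\log N\sim\log\log n$, which is where the a priori linear lower bound on $\operatorname{diam}P$ enters; without it one could not replace $\log\log D$ by $\log\log n$ in the final exponent.
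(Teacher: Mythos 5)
This theorem is not proved in the paper at all --- it is quoted from~\cite{kurz2008minimum} as a known result --- so there is no in-paper argument to compare against; I can only assess your proposal on its own. Your argument is sound and is essentially the standard proof of this bound: a single denominator $q\le D$ (the denominator of $2t_1$, which divides every integer $|t_1-t_j|$) clears all the $2t_i$, the identity $N=4q^2h^2=(D_i-T_i)(D_i+T_i)$ with $N\le 4D^4$ turns the $m\ge n^\delta$ collinear points into distinct ordered factorizations of one integer, Wigert's maximal-order estimate for the divisor function gives the exponent $\tfrac{1}{4\log 2}$, and the known linear lower bound for the diameter legitimately converts $\log\log(4D^4)$ into $\log\log n$. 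Two small points to tidy up: the divisor function is not monotone, so the inequality ``$\tau(N)\le\tau(4D^4)$'' should be dropped --- apply Wigert to $N$ itself and then use that $x\mapsto x/\log x$ is increasing together with $\log N\le\log(4D^4)$ (this needs $N\to\infty$, which follows from $\tau(N)\ge n^\delta$); and in the final absorption of the $o(1)$ and the additive $\tfrac{\log 4}{4}$ term, the threshold you obtain depends on $\delta$ as well as on $\varepsilon$, a looseness already present in the quoted formulation and harmless for how the theorem is used here.
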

For diameter bounds for circular sets, we refer the reader to~\cite{bat2018number}.

Particular cases of planar integral point sets are also discussed
in~\cite[\S 5.11]{brass2006research},~\cite[\S D20]{guy2013unsolved},~\cite{our-pmm-2018},~\cite{our-ped-2018}.
For generalizaton in higher dimensions and the appropriate bounds, see~\cite{kurz2005characteristic,nozaki2013lower}.

In the present paper we give a special bound for planar integral point sets in semi-general position.
The condition of semi-general position is important in the given proof.

\section{Preliminary results}

In this section, we give some lemmas which will be used for the proof.

\begin{lemma}
	\cite[Observation 1]{solymosi2003note}
	If a triangle $T$ has integer side-lengths $a \leq b \leq c$,
	then the minimal height $m$ of it is at least $\left(a - \frac{1}{4}\right)^{1/2}$.
\end{lemma}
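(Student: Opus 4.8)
The plan is to reduce the geometric claim to a single elementary inequality among the integer side-lengths. First I would observe that twice the area of $T$ equals the product of any side with the corresponding height, so the three heights are inversely proportional to $a,b,c$; hence the minimal height $m$ is the altitude dropped onto the longest side, $c$. I would then put that side on a coordinate axis: place the endpoints of the side of length $c$ at $(0,0)$ and $(c,0)$ and the third vertex at $(p,q)$ with $q>0$, chosen so that its distances to $(0,0)$ and $(c,0)$ equal $b$ and $a$ respectively, so that $m=q$. Subtracting $(c-p)^2+q^2=a^2$ from $p^2+q^2=b^2$ gives $2pc-c^2=b^2-a^2$, so that, setting $v:=c-p=\frac{a^2+c^2-b^2}{2c}$, one has $m^2=q^2=a^2-v^2$. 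Thus the assertion $m\ge\bigl(a-\tfrac14\bigr)^{1/2}$ is equivalent to $v^2\le\bigl(a-\tfrac12\bigr)^2$.

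The next step is to bound $v$ using the integrality of the sides. Since $b\le c$ we have $a^2+c^2-b^2\ge a^2>0$, so $v\ge 0$ and it suffices to prove $v\le a-\tfrac12$, that is, $b^2\ge(c-a)^2+c$. This is where the hypotheses enter: I would factor
\[
b^2-(c-a)^2=(a+b-c)(b+c-a),
\]
and note that $a+b-c$ is a \emph{positive} integer, hence $a+b-c\ge 1$ (positivity by the triangle inequality, integrality because $a,b,c\in\mathbb Z$), while $b+c-a\ge c$ exactly because $a\le b$. Multiplying these two bounds gives $b^2-(c-a)^2\ge c$, which rearranges to $b^2\ge a^2+c^2-2ac+c$ and hence to $v\le a-\tfrac12$. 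Substituting back, $m^2=a^2-v^2\ge a^2-\bigl(a-\tfrac12\bigr)^2=a-\tfrac14$, as required.

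I do not expect a real obstacle --- this is an ``observation'' rather than a deep theorem --- but two points deserve a word. The first is the claim that the smallest of the three altitudes is the one onto $c$; this is immediate from the area identity but should be stated. The second is that the ordering $a\le b\le c$ is used twice, and both halves matter: $a\le b$ is needed for the factor bound $b+c-a\ge c$, whereas $b\le c$ is what guarantees $v\ge 0$, so that $v\le a-\tfrac12$ genuinely forces $v^2\le\bigl(a-\tfrac12\bigr)^2$. Finally, I would remark that the bound is sharp, with equality for the isosceles triangles $a=b$, $c=2a-1$ --- for instance $(1,1,1)$, $(2,2,3)$, $(3,3,5)$ --- which shows that the crude estimate $a+b-c\ge 1$ is the right one and that the constant $\tfrac14$ cannot be lowered.
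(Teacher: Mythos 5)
Your proof is correct. Note that the paper itself offers no argument for this lemma --- it is quoted verbatim from Solymosi's note (Observation~1 there) --- so there is no internal proof to compare against; what you have done is reconstruct the external result, and the reconstruction is sound. Each step checks out: the minimal altitude is indeed the one onto the longest side $c$ by the area identity; the coordinate computation gives $m^2=a^2-v^2$ with $v=\frac{a^2+c^2-b^2}{2c}$, which is the law-of-cosines projection; and the reduction to $b^2\ge(c-a)^2+c$ is settled by the factorization $(a+b-c)(b+c-a)\ge 1\cdot c$, where you correctly isolate where each hypothesis is used ($a,b,c\in\mathbb{Z}$ and nondegeneracy for $a+b-c\ge 1$, the ordering $a\le b$ for $b+c-a\ge c$, and $b\le c$ for $v\ge 0$ so that the one-sided bound on $v$ controls $v^2$). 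This is essentially the same mechanism as Solymosi's original argument, which runs through Heron's formula $16T^2=\bigl((a+b)^2-c^2\bigr)\bigl(c^2-(b-a)^2\bigr)$ and the same integrality gap $a+b-c\ge 1$; your coordinate version avoids Heron's formula and is, if anything, more elementary. The sharpness remark for $a=b$, $c=2a-1$ is a worthwhile addition, as it shows the constant $\tfrac14$ in the lemma (and hence in Corollary~\ref{cor:solymosi_strip}) cannot be improved.
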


\begin{definition}
	The part of a plane between two parallel straight lines with distance $\rho$ between the lines
	is called a strip of width $\rho$.
\end{definition}

\begin{lemma}
	\cite{smurov1998stripcoverings}
	If a triange $T$ with minimal height $\rho$ is situated in a strip,
	then the width of a strip is at least $\rho$.
\end{lemma}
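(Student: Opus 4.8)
The plan is to prove the lemma by a direct area computation, comparing $\operatorname{Area}(T)$ expressed through its longest side with an estimate forced by the strip. I would fix a strip of width $w$ containing $T$, choose a unit vector $v$ normal to the two bounding lines, and write $\pi(x)=\langle x,v\rangle$ for the coordinate of $x$ in the direction $v$. Relabel the vertices of $T$ as $V_1,V_2,V_3$ so that $t_1\le t_2\le t_3$, where $t_i=\pi(V_i)$; since all of $T$ lies between the two lines, $t_3-t_1\le w$. Since $\operatorname{Area}(T)=\tfrac12(\text{side})\cdot(\text{altitude onto that side})$, the smallest altitude of $T$ is the one dropped onto the longest side; writing $\ell$ for the length of the longest side, this gives $\rho=2\operatorname{Area}(T)/\ell$, so it suffices to show $w\ge 2\operatorname{Area}(T)/\ell$.

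The main step will be to cut $T$ by the chord through $V_2$ that is parallel to the bounding lines of the strip. Let $W$ be the point of the segment $V_1V_3$ with $\pi(W)=t_2$; it exists and is unique because $\pi$ is affine and non-constant on $V_1V_3$ (if $t_1=t_3$ the triangle would project to a single point), and $t_1\le t_2\le t_3$ forces $W\in V_1V_3$. The cevian $V_2W$ lies inside $T$ and divides it into the triangles $V_1V_2W$ and $V_2V_3W$. Both have $V_2W$ as a base, and the line through $V_2W$ is exactly $\{\pi=t_2\}$; hence the corresponding altitudes are $\operatorname{dist}(V_1,\{\pi=t_2\})=t_2-t_1$ and $\operatorname{dist}(V_3,\{\pi=t_2\})=t_3-t_2$. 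Adding the two areas,
\begin{equation*}
	\operatorname{Area}(T)=\tfrac12|V_2W|\,(t_2-t_1)+\tfrac12|V_2W|\,(t_3-t_2)=\tfrac12|V_2W|\,(t_3-t_1).
\end{equation*}

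To finish, note that since $W$ lies on the segment $V_1V_3$ the distance $|V_2W|$ does not exceed $\max(|V_2V_1|,|V_2V_3|)$, which is at most $\ell$. Therefore
\begin{equation*}
	w\ \ge\ t_3-t_1\ =\ \frac{2\operatorname{Area}(T)}{|V_2W|}\ \ge\ \frac{2\operatorname{Area}(T)}{\ell}\ =\ \rho ,
\end{equation*}
which is the claim; the degenerate cases $t_1=t_2$ or $t_2=t_3$ (where one of the sub-triangles collapses to a segment) are handled by the same chain of inequalities with $W=V_1$ or $W=V_3$. The only steps that need a word of justification are the remark that the smallest altitude corresponds to the longest side and the bound $|V_2W|\le\ell$ (the distance from a point to a segment is attained at an endpoint); the rest is the routine cevian decomposition, so I do not expect a genuine obstacle here. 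One may also phrase the argument more conceptually: the width of any strip containing a convex set is at least the minimal width of that set, and for a triangle the minimal width equals its shortest altitude (by rotating calipers); the computation above is essentially a self-contained proof of that fact for triangles.
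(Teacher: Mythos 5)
The paper does not prove this lemma at all: it is imported verbatim from the cited reference \cite{smurov1998stripcoverings}, so there is no in-paper argument to compare yours against. Your proof is correct and self-contained. The cevian decomposition through the level line $\{\pi=t_2\}$ gives exactly $\operatorname{Area}(T)=\tfrac12|V_2W|\,(t_3-t_1)$, the bound $|V_2W|\le\max(|V_2V_1|,|V_2V_3|)\le\ell$ follows from convexity of the distance function on the segment $V_1V_3$, and the identification of the minimal altitude with the altitude onto the longest side is immediate from $\operatorname{Area}(T)=\tfrac12(\text{side})\cdot(\text{altitude})$. The degenerate boundary cases are handled correctly: $t_1=t_3$ would force the three vertices onto one line, which is excluded for a genuine triangle, and $t_1=t_2$ or $t_2=t_3$ just collapses one sub-triangle without affecting the chain of inequalities. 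In effect you have supplied a proof of the standard fact that the minimal width of a triangle equals its shortest altitude, which is precisely what the paper takes on faith from the reference; this is a reasonable thing to do, since the paper only uses the lemma through Corollary~\ref{cor:solymosi_strip}.
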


\begin{corollary}
	\label{cor:solymosi_strip}
	If a triangle $T$ with integer side-lengths $a \leq b \leq c$ is situated in a strip,
	then the width of a strip is at least $\left(a - \frac{1}{4}\right)^{1/2}$.
\end{corollary}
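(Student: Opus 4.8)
The plan is to obtain Corollary~\ref{cor:solymosi_strip} by simply chaining the two preceding lemmas, since together they already bracket the width of the strip from below by $\left(a-\tfrac14\right)^{1/2}$. Concretely, I would start with an arbitrary triangle $T$ having integer side-lengths $a\le b\le c$ and let $m$ denote its minimal height. The first lemma (Solymosi, Observation~1) immediately gives $m\ge\left(a-\tfrac14\right)^{1/2}$, using only that the sides are integers and $a$ is the smallest of them.

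Next I would invoke the second lemma (Smurov): since $T$ is assumed to lie inside a strip, and $T$ has minimal height $m$, the width $\rho$ of that strip satisfies $\rho\ge m$. Combining the two inequalities, $\rho\ge m\ge\left(a-\tfrac14\right)^{1/2}$, which is exactly the assertion of the corollary. No further computation is required; the statement follows by transitivity of $\ge$.

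The only point that deserves a word of care — and it is essentially the one nontrivial input, already packaged into the cited lemmas — is that it is the \emph{minimal} height of $T$ that controls the strip width: a strip containing $T$ may be oriented along any of the three side-directions, so the smallest such width is governed by the shortest altitude, and the first lemma is precisely calibrated to lower-bound that shortest altitude in terms of the shortest side $a$. Since both of these facts are supplied by the quoted results, there is no real obstacle here; the proof is a two-line deduction, and I would present it as such.
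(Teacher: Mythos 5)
Your proposal is correct and matches the paper exactly: the corollary is stated there without a written proof precisely because it is the immediate chaining of the two preceding lemmas (minimal height $m\ge\left(a-\tfrac14\right)^{1/2}$ from Solymosi's observation, and strip width $\rho\ge m$ from Smurov's lemma). Nothing further is needed.
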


\begin{lemma}
	\cite[Lemma 4]{our-vmmsh-2018};
	\cite[Lemma 2.4]{my-pps-linear-bound-2019}
	\label{lem:square_container}
	Let $M\in\mathfrak{M}(2,n)$, $\operatorname{diam} M = d$.
	Then $M$ is situated in a square of side length $d$.
\end{lemma}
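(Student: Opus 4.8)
The plan is to show that the orthogonal projection of $M$ onto every line is a segment of length at most $d$, and then to intersect two such strips taken in perpendicular directions.

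First I would fix a Cartesian coordinate system in the plane. Since $M$ is finite by~\cite{anning1945integral,erdos1945integral}, there exist points $A_{\min},A_{\max}\in M$ attaining respectively the smallest and the largest abscissa among the points of $M$; call these values $x_{\min}$ and $x_{\max}$. Then
\[
	x_{\max}-x_{\min} \leq |A_{\min}A_{\max}| \leq \operatorname{diam} M = d,
\]
since the Euclidean distance between two points is never smaller than the absolute difference of their first coordinates. Consequently every point of $M$ has abscissa lying in a segment of length at most $d$, i.e.\ $M$ is contained in a strip of width at most $d$ bounded by two vertical lines.

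Next I would apply the very same argument to the ordinates, obtaining a second strip, of width at most $d$, bounded by two horizontal lines and also containing $M$. The intersection of these two strips is an axis-parallel rectangle both of whose side lengths do not exceed $d$; enlarging the shorter side we embed it into an axis-parallel square of side length exactly $d$. Hence $M$ lies in a square of side $d$, which is the assertion of the lemma.

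The argument is completely elementary and I do not expect a genuine obstacle: the only point deserving a remark is that the finiteness of $M$ makes the extremal coordinates actually attained, though one could equally well use the infimum and supremum of the coordinates. Note also that any orthogonal pair of directions works, so there is no need to choose the coordinate axes carefully — a single square suffices for the statement.
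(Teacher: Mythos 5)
Your proof is correct: bounding the spread of the abscissas and ordinates separately by $d$ (since a coordinate difference never exceeds the Euclidean distance) and intersecting the two resulting strips is exactly the standard elementary argument for this fact. The paper itself does not reprove the lemma but cites it from earlier work, and your projection argument matches the intended proof there, so there is nothing to object to; the remark that any orthogonal pair of directions works is a harmless bonus.
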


\begin{definition}
	\cite[Definition 2.5]{my-pps-linear-bound-2019}
	A \textit{cross} for points $M_1$ and $M_2$, denoted by $cr(M_1,M_2)$, is the union of two straight lines:
	the line through $M_1$ and $M_2$,
	and the perpendicular bisector of line segment $M_1 M_2$.
\end{definition}

\begin{lemma}
	\cite[Theorem 3.10]{my-pps-linear-bound-2019}
	\label{lem:no_distance_one}
	Each set $M\in\mathfrak{M}(2,n)$
	such that for some $M_1,M_2 \in M$ equality $|M_1 M_2|=1$ holds,
	consists of $n-1$ points, including $M_1$ and $M_2$, on a straight line,
	and one point out of the line, on the perpendicular bisector of line segment $M_1 M_2$.
\end{lemma}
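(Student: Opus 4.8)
The plan is to use the pair $M_1,M_2$ with $|M_1M_2|=1$ to confine all of $M$ to an extremely thin region. Put $A=M_1$, $B=M_2$, let $\ell$ be the line $AB$ and $b$ the perpendicular bisector of the segment $AB$, so that $\ell\cup b=cr(A,B)$ and $A,B\in\ell$. For an arbitrary point $P\in M\setminus\{A,B\}$ the distances $|PA|$ and $|PB|$ are positive integers, and the triangle inequality for the triple $P,A,B$ gives $\bigl||PA|-|PB|\bigr|\le|AB|=1$, so this difference is $0$ or $1$. If it equals $1=|AB|$, the triangle inequality holds with equality, which forces $P$, $A$, $B$ to be collinear, i.e.\ $P\in\ell$; if it equals $0$, then $P$ is equidistant from $A$ and $B$, i.e.\ $P\in b$. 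Hence $M\subseteq\ell\cup b$.

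The heart of the proof is to show that $M$ contains at most one point of $b$. Suppose, towards a contradiction, that $P,Q\in M\cap b$ are distinct. Since $A\notin b$, the points $A$, $P$, $Q$ are not collinear and form a genuine triangle whose side lengths $|AP|$, $|AQ|$, $|PQ|$ are integers, each at least $1$. This triangle lies in the strip bounded by $b$ (which contains the side $PQ$) and the line through $A$ parallel to $b$; the width of that strip equals the distance from $A$ to $b$, which is $\tfrac12|AB|=\tfrac12$ because $\ell\perp b$ at the midpoint of $AB$. On the other hand, Corollary~\ref{cor:solymosi_strip} says that any strip containing this triangle has width at least $\bigl(1-\tfrac14\bigr)^{1/2}=\tfrac{\sqrt3}{2}$, the smallest side being $\ge 1$. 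Since $\tfrac12<\tfrac{\sqrt3}{2}$, this is a contradiction, so $\#(M\cap b)\le1$.

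To conclude, recall that $M$ is an integral point set and hence is not collinear, so some point of $M$ lies off $\ell$; by the first paragraph that point lies on $b$, and by the second paragraph it is the only one. Moreover $b\cap\ell$ consists of the midpoint of $AB$, which is at distance $\tfrac12$ from $A$ and so is not a point of $M$, whence the unique point of $M$ on $b$ genuinely lies off $\ell$. Therefore $M$ splits into $n-1$ collinear points lying on $\ell$ — among them $M_1$ and $M_2$ — together with exactly one point lying on the perpendicular bisector $b$ of $M_1M_2$, which is exactly the asserted structure. The single step carrying real content is the exclusion of a second point of $M$ on $b$; should one wish to avoid Corollary~\ref{cor:solymosi_strip}, the same contradiction is obtained directly, since a point of $b$ at integer distance $p$ from $A$ sits at signed distance $\tfrac12\sqrt{4p^2-1}$ from the midpoint of $AB$ and, as $4p^2-1\equiv3\pmod4$ is never a perfect square, two distinct such points are always at an irrational distance from one another, which is impossible in an integral point set.
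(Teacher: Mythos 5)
Your proof is correct and complete: the triangle inequality confines $M$ to $cr(M_1,M_2)$, and Corollary~\ref{cor:solymosi_strip} (or, as you note, the irrationality of $\tfrac12\sqrt{4p^2-1}$) rules out a second point of $M$ on the perpendicular bisector, while the midpoint of $M_1M_2$ is excluded because its distance $\tfrac12$ to $M_1$ is not an integer. The paper itself does not prove this lemma but only cites it from an external reference, and your argument uses exactly the machinery (the cross decomposition and the strip bound) that the paper imports from that same source, so it is essentially the intended proof rather than a genuinely different route.
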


\begin{lemma}
	\label{lem:count_of_points_on_hyperbolas}
	Let $\{M_1, M_2, M_3, M_4\} \subset M\in\overline{\mathfrak{M}}(2,n)$
	(points $M_2$ and $M_3$ may coincide, other points may not), $n\geq 4$.
	Then $\# M \leq 4 \cdot |M_1 M_2| \cdot |M_3 M_4|$.
\end{lemma}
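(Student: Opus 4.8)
The plan is to stratify $M$ using the two pencils of confocal hyperbolas with foci $\{M_1,M_2\}$ and $\{M_3,M_4\}$. Write $a=|M_1M_2|$ and $b=|M_3M_4|$; these are $\ge 1$ since $M_1\neq M_2$ and $M_3\neq M_4$. First I would note that $a\ge 2$: if $a=1$, then Lemma~\ref{lem:no_distance_one} forces $n-1$ of the points of $M$ onto a line, contradicting semi-general position once $n-1\ge 3$, i.e. $n\ge 4$; symmetrically $b\ge 2$. For $X\in M$ set $i(X)=|XM_1|-|XM_2|$ and $j(X)=|XM_3|-|XM_4|$; both are integers (all distances in $M$ are integral), with $|i(X)|\le a$ and $|j(X)|\le b$ by the triangle inequality.

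Next I would peel off the ``boundary'' points of $M$. If $|i(X)|=a$ then $X$ lies on the line $M_1M_2$, hence $X\in\{M_1,M_2\}$ by semi-general position, and both of these indeed occur; thus exactly two points have $|i|=a$, and symmetrically at most two have $|j|=b$. If $i(X)=0$ then $X$ lies on the perpendicular bisector of $M_1M_2$, a single line, which carries at most two points of $M$; likewise for $j(X)=0$. Hence at most $2+2+2+2=8$ points of $M$ satisfy $|i(X)|=a$, or $|j(X)|=b$, or $i(X)j(X)=0$.

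For the remaining points, with $1\le|i(X)|\le a-1$ and $1\le|j(X)|\le b-1$, I would fix a value pair $(p,q)$ in that range. Then $\{Y:||YM_1|-|YM_2||=p\}$ is a non-degenerate hyperbola with foci $M_1,M_2$ and $\{Y:||YM_3|-|YM_4||=q\}$ a non-degenerate hyperbola with foci $M_3,M_4$; since a non-degenerate conic determines its unordered pair of foci and $\{M_1,M_2\}\neq\{M_3,M_4\}$ — this is where the hypothesis that only $M_2$ and $M_3$ may coincide is used, giving $M_1\notin\{M_3,M_4\}$ — these two conics are distinct, and so meet in at most $4$ points. Every remaining point with $(|i(X)|,|j(X)|)=(p,q)$ lies in that intersection, so the remaining points number at most $4(a-1)(b-1)$. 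Adding the boundary contribution, $\#M\le 4(a-1)(b-1)+8=4ab-4a-4b+12\le 4ab$, the last step using $a+b\ge 4$.

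The geometry here is elementary; the content is entirely in the choice of stratification. The key idea is to index points by the \emph{unsigned} quantities $|i(X)|,|j(X)|$, so that each admissible value $p\ge 1$ names one full conic rather than a single hyperbola branch — this is what lets the Bézout bound of $4$ per cell yield the clean constant, and isolating the three degenerate strata is what absorbs the additive slack. The one step that needs genuine care is verifying $\{Y:||YM_1|-|YM_2||=p\}\neq\{Y:||YM_3|-|YM_4||=q\}$ for every relevant $p,q$: this uses both the coincidence hypotheses on the $M_i$ and the bounds $a,b\ge 2$, so that these loci really are hyperbolas and not lines or pairs of rays.
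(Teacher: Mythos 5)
Your proof is correct and follows essentially the same route as the paper: your strata $|i(X)|\in\{0,a\}$ and $|j(X)|\in\{0,b\}$ are exactly the paper's two crosses $cr(M_1,M_2)$ and $cr(M_3,M_4)$ (at most $4$ points each), the remaining points are confined to intersections of the $(a-1)(b-1)$ pairs of nondegenerate hyperbolas (at most $4$ each), and both arguments use Lemma~\ref{lem:no_distance_one} to get $a,b\geq 2$ before the identical final arithmetic $4(a-1)(b-1)+8\leq 4ab$. Your explicit checks that the two conics in each cell are distinct (distinct focus pairs, since $M_1\notin\{M_3,M_4\}$) and hence meet in at most four points only make explicit what the paper leaves implicit.
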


\begin{remark}
	Lemma~\ref{lem:count_of_points_on_hyperbolas} is one of the variations of~\cite{erdos1945integral}.
\end{remark}

\begin{proof}
	Each point $N\in M$ satisfies one of the following conditions:

	a) $N$ belongs to $cr(M_1,M_2)$~--- overall at most 4 points;

	b) $N$ belongs to $cr(M_3,M_4)$~--- overall at most 4 points;

	c) $N$ belongs to the intersection of one of $|M_1 M_2| - 1$ hyperbolas
	with one of $|M_3 M_4| - 1$ hyperbolas~--- overall at most $4 (|M_1 M_2| - 1)(|M_3 M_4| - 1)$ points;

	Due to Lemma~\ref{lem:no_distance_one} we have $|M_1 M_2| \geq 2$ and $|M_3 M_4| \geq 2$.
	Since
	\begin{multline}
		4 (|M_1 M_2| - 1)(|M_3 M_4| - 1) + 4 + 4
		=
		4 ( (|M_1 M_2| - 1)(|M_3 M_4| - 1) + 2)
		=
		\\=
		4 ( |M_1 M_2| \cdot |M_3 M_4| + 1 - |M_1 M_2| - |M_3 M_4| + 2)
		=
		\\=
		4 ( |M_1 M_2| \cdot |M_3 M_4| + (1 - |M_1 M_2|) + (2 - |M_3 M_4|))
		\leq
		4 |M_1 M_2| \cdot |M_3 M_4|
		,
	\end{multline}
	we are done.
\end{proof}

\section{The main result}

\begin{theorem}
	\label{thm:main_result}
	For every integer $n \geq 3$ we have
	\begin{equation}
		\overline{d}(2,n) \geq (n/5)^{5/4}
		.
	\end{equation}
\end{theorem}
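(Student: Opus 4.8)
The plan is to combine the counting bound of Lemma~\ref{lem:count_of_points_on_hyperbolas} with the geometric ``strip'' estimate of Corollary~\ref{cor:solymosi_strip}, using Lemma~\ref{lem:square_container} to confine $M$ to a square of side $d := \overline{d}(2,n)$. Fix $M \in \overline{\mathfrak{M}}(2,n)$ realizing the minimum diameter. The idea is a slicing argument: choose a direction and cut the square into parallel strips; each strip, if it meets at least three points of $M$, contains a triangle with integer sides whose smallest side is at least some value, forcing the strip to be reasonably wide by Corollary~\ref{cor:solymosi_strip}; hence there cannot be too many strips, i.e.\ too many ``rich'' slices, which bounds $n$ from above in terms of $d$.

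First I would set up the slicing. Write $d = \operatorname{diam} M$ and enclose $M$ in a $d \times d$ square (Lemma~\ref{lem:square_container}). Pick the side $a$ of the smallest triangle we are willing to tolerate — concretely, suppose every triangle formed by three points of $M$ has shortest side $\geq s$ for a parameter $s$ to be optimized; then by Corollary~\ref{cor:solymosi_strip} any strip containing three points of $M$ has width $\geq (s - \tfrac14)^{1/2}$. Partitioning the square by parallel lines into strips of width slightly below $(s-\tfrac14)^{1/2}$, at most one more than $d / (s-\tfrac14)^{1/2}$ strips are needed, and each strip meets the line transversally so each contains at most two points of $M$ unless... — here is where semi-general position enters: three \emph{collinear} points would be allowed in an arbitrarily thin strip, so the hypothesis ``no three collinear'' is exactly what makes ``three points in a thin strip'' impossible. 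This gives roughly $n \leq 2 d / (s - \tfrac14)^{1/2} + O(1)$, i.e.\ $d \gtrsim n^2 s$ up to constants — too weak by itself, so $s$ must be taken large, of order $n^{?}$.

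To get a large lower bound on $s$, i.e.\ on the shortest side of \emph{every} triangle in $M$, I would invoke Lemma~\ref{lem:count_of_points_on_hyperbolas}: taking $M_2 = M_3$ the lemma gives $n = \# M \leq 4 |M_1 M_2| \cdot |M_2 M_4|$ for \emph{any} three points $M_1, M_2, M_4$ of $M$. In particular, if $M_1 M_2$ is a shortest side of the triangle $M_1 M_2 M_4$ and $M_2 M_4$ is a second-shortest side, then both are $\leq \operatorname{diam} M = d$, which is the wrong direction; but reading it the other way, $|M_1 M_2| \cdot |M_2 M_4| \geq n/4$, so for \emph{every} pair of points $M_1, M_2$ that occur as two sides of a common triangle, the product of the two side lengths is at least $n/4$. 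Feeding this into the strip count: a strip containing three points $P, Q, R$ of $M$ yields a triangle whose two shorter sides have product $\geq n/4$, hence shortest side $\geq \sqrt{n}/2$, hence by Corollary~\ref{cor:solymosi_strip} width $\geq (\sqrt n/2 - \tfrac14)^{1/2} \sim n^{1/4}/\sqrt2$. Then the number of disjoint strips of this width fitting in the $d \times d$ square is $\lesssim d / n^{1/4}$, each carrying $\leq 2$ points outside a bounded exceptional set, so $n \lesssim d / n^{1/4}$, giving $d \gtrsim n^{5/4}$; tracking the constant $5$ through ``$4|M_1M_2||M_3M_4|$'' and the packing should produce exactly $d \geq (n/5)^{5/4}$.

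The main obstacle I expect is the bookkeeping of the strip decomposition: a naive slicing into strips of a \emph{fixed} width does not guarantee that a rich strip actually contains three points that form a triangle with the claimed short-side bound, because the three points could be spread across strip boundaries, and a point can lie on the boundary of two strips. The clean fix is to argue contrapositively on a \emph{single} strip: if some strip of width $< n^{1/4}/\sqrt2$ (roughly) contained three points of $M$, they would be non-collinear (semi-general position!) and form a triangle with shortest side $< \sqrt n / 2$, contradicting the product bound $|M_1M_2|\cdot|M_2M_4|\geq n/4$ from Lemma~\ref{lem:count_of_points_on_hyperbolas}; hence every such strip has at most two points, and choosing $\lceil d / w\rceil$ consecutive disjoint strips of width $w$ just under the threshold covers the square and yields $n \leq 2\lceil d/w\rceil$. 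Small-$n$ cases ($n = 3, 4$) where the packing constants degenerate would be checked directly against $(n/5)^{5/4} < $ the trivially smallest integral triangle diameter. I also need $s - \tfrac14 > 0$ for Corollary~\ref{cor:solymosi_strip} to be non-vacuous, which holds once $n \geq 2$, so no extra case analysis is needed there.
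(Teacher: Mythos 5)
There is a genuine gap at the pivotal step of your strip argument. From Lemma~\ref{lem:count_of_points_on_hyperbolas} with $M_2=M_3$ you correctly get that any two sides $a\leq b$ of a triangle in $M$ sharing a vertex satisfy $ab\geq n/4$, but from this you conclude ``hence shortest side $\geq \sqrt n/2$''. That inference is false: $ab\geq n/4$ with $a\leq b$ only yields $b\geq\sqrt n/2$, while $a$ may be far smaller (for instance, the pair of $M$ realizing the global minimum distance is only known to be at distance roughly $n^{1/3}$, and any triangle containing that pair has shortest side that small; the lemma then merely forces the \emph{other} side from that pair to be long). Since Corollary~\ref{cor:solymosi_strip} bounds the strip width by $(a-\tfrac14)^{1/2}$ in terms of the \emph{shortest} side, your slicing only guarantees width of order $n^{1/6}$, which gives $d\gtrsim n^{7/6}$ --- exactly the weaker bound discussed in the remark after Theorem~\ref{thm:main_result}, not $(n/5)^{5/4}$.

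The missing idea is to sacrifice one exceptional point. The paper chooses $M_1,M_2$ realizing the minimum distance of $M$ and $M_3,M_4$ realizing the minimum distance $m$ of $M\setminus\{M_1\}$, then splits into cases by comparing $m$ with $p^{2/5}$ ($p=\operatorname{diam}M$): if $m\leq p^{2/5}$, Lemma~\ref{lem:count_of_points_on_hyperbolas} gives $n\leq 4m^2\leq 4p^{4/5}$; if $m>p^{2/5}$, then \emph{all} pairwise distances within $M\setminus\{M_1\}$ exceed $p^{2/5}$, so (using semi-general position, as you rightly note) no strip of width $p^{1/5}/2$ contains three points of $M\setminus\{M_1\}$, and slicing the containing square into about $2p^{4/5}$ strips yields $n\leq 5p^{4/5}$. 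Your plan could be repaired in the same spirit --- e.g.\ observe that the lemma forbids two distinct pairs at distance below $\sqrt n/2$, remove one point of the unique short pair, and run your slicing on the remaining $n-1$ points --- but as written, the claim that every triangle of $M$ has shortest side $\geq\sqrt n/2$ is untrue, and without removing an exceptional point the argument does not reach the exponent $5/4$.
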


\begin{proof}
	For $n = 3$ we have $\overline{d}(2,3) = 1$.
	Consider $M\in\overline{\mathfrak{M}}(2,n)$, $n \geq 4$, $\operatorname{diam} M = p$.

	Let us choose points $M_1, M_2, M_3, M_4 \in M$
	(points $M_2$ and $M_3$ may coincide, other points may not), such that
	\begin{equation}
		\min_{A, B \in M} |AB| = |M_1 M_2|
		,
	\end{equation}
	\begin{equation}
		\min_{A, B \in M \setminus \{M_1\}} |AB| = |M_3 M_4| = m
		.
	\end{equation}

	For $m \leq p^{2/5}$, Lemma~\ref{lem:count_of_points_on_hyperbolas} yields that
	\begin{equation}
		n \leq 4 \cdot |M_1 M_2| \cdot |M_3 M_4| \leq  4 p^{4/5}
		,
	\end{equation}
	or, that is the same,
	\begin{equation}
		\label{eq:hyperbolas_5_4}
		p \geq (n/4) ^ {5/4} > (n/5) ^ {5/4}
		.
	\end{equation}

	So, let us consider $m > p^{2/5}$.
	Then for any $A,B \in M\setminus\{M_1\}$ the inequality $|AB| > p^{2/5}$ holds.
	Due to Corollary~\ref{cor:solymosi_strip}, no three points of $M\setminus\{M_1\}$
	are located in a strip of width $p^{1/5} / 2$.

	Lemma~\ref{lem:square_container} yields that $M$ is situated in a square with side length $p$.
	Let us partition this square into $q$ strips, $2p^{4/5} \leq q < 2p^{4/5} + 1$, each of width at most $p^{1/5} / 2$.
	Every strip contains at most two points of  $M\setminus\{M_1\}$,
	thus
	\begin{equation}
		\label{eq:strips_4_5}
		n \leq 2(2p^{4/5} + 1) + 1
		= 4p^{4/5}+3
		\leq 5 p^{4/5}
		.
	\end{equation}
	The latter inequality holds because $\overline{d}(2,n) \geq 4$ for $n\geq 4$~~\cite{kurz2008minimum}
	and $4^{4/5}>3$.
	From the inequality~\eqref{eq:strips_4_5} one can easily derive that
	\begin{equation}
		\label{eq:strips_5_4}
		p \geq (n/5) ^ {5/4}
		.
	\end{equation}
\end{proof}

\begin{remark}
	The following result in known:
\end{remark}

\begin{lemma}
	\cite[Corollary 1]{solymosi2003note}
	For $H \in \overline {\mathfrak{M}}(2,n)$, the minimum
	distance in H is at least $n^{1/3}$.
\end{lemma}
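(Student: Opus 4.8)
The plan is to recast the assertion as the upper bound $\# H \le \mu^3$, where $\mu$ denotes the minimum distance of $H$, and to extract it from the hyperbola counting of Lemma~\ref{lem:count_of_points_on_hyperbolas}, which is $\operatorname{diam}$-free and is therefore the only tool of this section that can possibly cap $\# H$ in terms of $\mu$ alone. Fix an edge realising the minimum, say $|M_1 M_2| = \mu$; by Lemma~\ref{lem:no_distance_one} we already have $\mu \ge 2$. The engine is then immediate: if I can also name a second edge $M_3 M_4$ of length at most $\mu^2/4$, with $\{M_3,M_4\}$ sharing at most the vertex $M_2$ with $\{M_1,M_2\}$, then Lemma~\ref{lem:count_of_points_on_hyperbolas} gives $\# H \le 4\cdot\mu\cdot(\mu^2/4)=\mu^3$, hence $\mu \ge (\# H)^{1/3}$. (Matching the stated constant exactly may require a marginally sharper form of the second bound or of the hyperbola count; I would track this at the end rather than now.)

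First I would dispose of the relabelling bookkeeping, since it is genuinely harmless: the pairs $\{M_1,M_2\}$ and $\{M_3,M_4\}$ are unordered, and Lemma~\ref{lem:count_of_points_on_hyperbolas} permits the single coincidence $M_2=M_3$. Thus, whatever the second-smallest edge is, I can orient the minimal edge so that any shared vertex becomes $M_2=M_3$, leaving $M_1$ and $M_4$ distinct from everything, and the lemma applies. This reduces the whole statement to the following geometric claim, which is where the work lives: in a set of $\overline{\mathfrak M}(2,n)$ with minimum distance $\mu$ there is always a second edge of length $O(\mu^2)$.

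The main obstacle is precisely this claim, and I expect the difficulty to concentrate in the \emph{isolated} case, where every edge other than the minimal one is longer than $\mu^2$. Here the plan is a dichotomy mirroring the proof of Theorem~\ref{thm:main_result}: when a short second edge exists, the paragraph above finishes; otherwise I would argue that the configuration cannot be large at all. Deleting the endpoint $M_2$ leaves a set whose minimum distance exceeds $\mu^2$; every remaining point $X$ satisfies $|XM_1|-|XM_2|=k$ for an integer $k$ with $|k|\le\mu-1$ (the value $|k|=\mu$ being excluded by the absence of collinear triples), so the points distribute over the $2\mu-1$ conics with foci $M_1,M_2$, the central one $k=0$ being the perpendicular bisector and carrying at most two points. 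It then remains to bound, on each hyperbolic branch, the number of configuration points whose mutual distances all exceed $\mu^2$; I would attempt this by combining the convexity of the branch, the height estimate of Corollary~\ref{cor:solymosi_strip} forbidding three points in a narrow strip, and the integrality of the focal distances along the branch. This last step is the genuine Erdős-type core~\cite{erdos1945integral}: bounding the number of integral-distance points lying on a single hyperbola is exactly the phenomenon that Lemma~\ref{lem:count_of_points_on_hyperbolas} captures only when a \emph{second} focus pair is available, which is the resource the isolated case denies us. I would therefore expect to close it either by an induction on $\# H$ after peeling off the close pair $\{M_1,M_2\}$, or by manufacturing an auxiliary focus pair among the surviving points, and I regard securing this bound as the decisive point of the proof.
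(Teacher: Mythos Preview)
The paper does not prove this lemma; it is quoted from \cite[Corollary~1]{solymosi2003note} and then combined with the strip technique of Theorem~\ref{thm:main_result} only to observe that this older route yields the weaker exponent $7/6$. There is thus no in-paper proof to compare your proposal against.

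On its own merits, your proposal is a plan with a self-identified hole, and that hole is the entire content of the lemma, not a detail to be tidied up later. Routing through Lemma~\ref{lem:count_of_points_on_hyperbolas} is natural, but the assertion that a second edge of length $O(\mu^{2})$ always exists is neither proved nor evidently true, and both of your suggested closures for the ``isolated'' case fail. The induction idea runs in the wrong direction: peeling off $M_1$ (or $\{M_1,M_2\}$) leaves a set whose minimum distance $\mu'$ satisfies $\mu'\ge\mu$, so the inductive conclusion $n-1\le(\mu')^{3}$ is \emph{weaker} than the target $n\le\mu^{3}$, not stronger; nothing is gained. The alternative of ``manufacturing an auxiliary focus pair among the surviving points'' is simply a restatement of the missing input. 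Your branch-by-branch sketch (convexity of the hyperbola, Corollary~\ref{cor:solymosi_strip}, integrality of focal distances) is in fact where the real argument lives in \cite{solymosi2003note}, and it requires a genuine geometric count on each branch that you have not supplied; that count is the substance of Solymosi's corollary, not a constant-tracking afterthought.
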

Applying the same technique, one can easily derive that
\begin{equation}
	n \leq 3 \frac{\operatorname{diam} H }{n^{1/6}}
	,
\end{equation}
which leads to the bound
\begin{equation}
	\overline{d}(2,n) \geq c_3 n^{7/6}
	,
\end{equation}
which is less than the one from Theorem~\ref{thm:main_result}.

\section{Conclusion}
The presented bound is the first special lower bound for sets in semi-general position.
Thus, we did not accepted the challenge to make the constant in Theorem~\ref{thm:main_result} as large as possible,
in order to keep the ideas of the proof clear and understandable.
A more thorough research can be done in the future to enlarge the constant.
However, the upper and lower bounds are still not tight.

\section{Acknowledgements}
Author thanks Dr. Prof. E.M. Semenov for proofreading and valuable advice.

\printbibliography

\end{document}